\theoremstyle{plain}
\newtheorem{thm}{Theorem}
      \newtheorem{lemma}[thm]{Lemma}
      \newtheorem{prop}[thm]{Proposition}
      \newtheorem{cor}[thm]{Corollary}
\def\AA{{\bf A}}
\theoremstyle{thm}
\begin{document}
\begin{abstract} 
We prove that every finite abelian group $G$ occurs as a subgroup of the class group of infinitely many real cyclotomic fields. 
\end{abstract}

\title[Class groups of real cyclotomic fields]{Class groups of real cyclotomic fields}

\author{Mohit Mishra}
\address{Harish-Chandra Research Institute, HBNI, Chhatnag Road, Jhunsi,  Allahabad 211 019, India.}\email{m.mishra0808@gmail.com}
\author{Rene Schoof}
\address{Università di Roma ``Tor Vergata'', Dipartimento di Matematica, Via della Ricerca Scientifica, I-00133 Roma, Italy}\email{schoof@mat.uniroma2.it}
\author{Lawrence C. Washington}
\address{Dept. of Mathematics, Univ. of Maryland, College Park, MD, 20742 USA}\email{lcw@umd.edu}
\keywords{Real cyclotomic fields,  class groups, class field theory}
\subjclass[2010] {Primary: 11R18,  11R29 Secondary: 11R37}
\maketitle

\section{\textbf{Introduction}} A natural question asks what groups occur as class groups of number fields.
Cornell \cite{GC3} proved that every finite abelian group occurs as a subgroup of the class group of some cyclotomic field. A natural related question is: does 
every finite abelian group occur as a subgroup for some real cyclotomic field? Class groups of real cyclotomic fields are quite small compared to the class groups 
of cyclotomic fields and it is interesting to find real cyclotomic fields with large class groups (see \cite{SC}). For a prime $p$, Cornell and Rosen (\cite{GC} and \cite{CR}) 
proved that every finite  elementary abelian $p$-group is embedded in the class group of infinitely many real cyclotomic fields. In \cite{Os2}, it is shown  that, for each $n\ge 1$, there are infinitely  
many real cyclotomic fields with class group containing a subgroup isomorphic to $(\mathbb Z/n\mathbb Z)^2$. To the best of our knowledge, there is no other result known in this direction. In this article, we extend these previous results and prove the following:

\begin{thm}\label{thm1}
Let $G$ be a finite abelian group. Then $G$ occurs as a subgroup of the class group of infinitely many real cyclotomic fields.  
\end{thm}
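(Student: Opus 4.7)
Since every finite abelian group embeds in $(\mathbb{Z}/n\mathbb{Z})^r$, where $n$ is its exponent and $r$ its rank, it suffices to construct, for each pair $(n,r)$, infinitely many conductors $N$ with $(\mathbb{Z}/n\mathbb{Z})^r \hookrightarrow \mathrm{Cl}(\mathbb{Q}(\zeta_N)^+)$. My first step is a genus-theoretic setup. Choose primes $\ell_1, \ldots, \ell_t$ (with $t$ large, to be determined) satisfying $\ell_i \equiv 1 \pmod{2n}$, so that each $\mathbb{Q}(\zeta_{\ell_i})^+$ contains a unique cyclic subfield $F_i$ of degree $n$. Set $N = \ell_1 \cdots \ell_t$ and $K = \mathbb{Q}(\zeta_N)^+$. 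The compositum $L := F_1 \cdots F_t \subseteq K$ has $\mathrm{Gal}(L/\mathbb{Q}) \cong (\mathbb{Z}/n\mathbb{Z})^t$, with the inertia at $\ell_i$ being exactly the $i$-th factor. Taking $F \subseteq L$ to be the cyclic subfield of degree $n$ fixed by the kernel of the sum map $(\mathbb{Z}/n\mathbb{Z})^t \to \mathbb{Z}/n\mathbb{Z}$, each $\ell_i$ is totally ramified in $F/\mathbb{Q}$ while $L/F$ is everywhere unramified with $\mathrm{Gal}(L/F) \cong (\mathbb{Z}/n\mathbb{Z})^{t-1}$; classical genus theory then yields $(\mathbb{Z}/n\mathbb{Z})^{t-1} \hookrightarrow \mathrm{Cl}(F)$.

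The crux is transferring this information up to $\mathrm{Cl}(K)$. Let $H_F$ denote the Hilbert class field of $F$. By Abhyankar's lemma applied to the compositum $H_F K / F$, the extension $H_F K / K$ is unramified, hence $\mathrm{Gal}(H_F K / K) \cong \mathrm{Gal}(H_F / H_F \cap K) \hookrightarrow \mathrm{Cl}(K)$. Unfortunately $L \subseteq K$ already sits inside $H_F \cap K$, and the quotient $\mathrm{Cl}(F) \twoheadrightarrow \mathrm{Gal}(H_F / H_F \cap K)$ precisely annihilates the genus subgroup $(\mathbb{Z}/n)^{t-1}$ that we constructed. To salvage the argument, I would invoke relative genus theory directly on $K/E$, where $E \subseteq K$ is the cyclic degree-$n$ subfield of $\mathbb{Q}(\zeta_{\ell_1})^+$. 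Imposing via Chebotarev the further condition that each $\ell_j$ ($j \ge 2$) be an $n$-th power residue modulo $\ell_1$ forces $\ell_j$ to split completely in $E$, contributing $n$ distinct primes of $E$ above $\ell_j$, all ramified in $K/E$. Chevalley's ambiguous class number formula
\[
|\mathrm{Cl}(K)^{\mathrm{Gal}(K/E)}| = h(E) \cdot \frac{\prod_v e_v}{[K:E] \cdot [O_E^\times : O_E^\times \cap N_{K/E}(K^\times)]},
\]
or an appropriate rank-version for the abelian extension $K/E$, then bounds $\mathrm{rk}_n \mathrm{Cl}(K)$ from below by a quantity growing linearly in $t$.

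The main obstacle I foresee is controlling the norm-index factor $[O_E^\times : O_E^\times \cap N_{K/E}(K^\times)]$. Because $K$ is totally real, $O_E^\times$ has the maximal rank $n-1$ permitted by Dirichlet's unit theorem, so this index can in principle be as large as $n^n$; bounding it uniformly as $t \to \infty$ will presumably require additional local-norm conditions on the $\ell_i$, together with Hilbert-symbol or reciprocity arguments ensuring that a sufficient supply of units of $E$ are global norms from $K$. Once this bookkeeping is handled, a Chebotarev density argument applied to the combined congruence and splitting conditions yields infinitely many admissible tuples $(\ell_1, \ldots, \ell_t)$, and hence infinitely many distinct conductors $N$ with $(\mathbb{Z}/n\mathbb{Z})^r \hookrightarrow \mathrm{Cl}(\mathbb{Q}(\zeta_N)^+)$.
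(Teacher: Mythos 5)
Your genus-theoretic setup is correct, and you have accurately diagnosed the central difficulty: the subgroup $(\mathbb{Z}/n\mathbb{Z})^{t-1}\hookrightarrow Cl_F$ produced by genus theory corresponds to the everywhere-unramified extension $L/F$ with $L\subseteq K$, so it is exactly the part of $Cl_F$ that is not guaranteed to survive inside $Cl_K$ via the unramified extension $H_FK/K$. But the salvage you propose is not a proof; it is a list of the things that would still have to be proved. Chevalley's ambiguous class number formula is a statement about \emph{cyclic} extensions, and $K/E$ is very far from cyclic; the ``appropriate rank-version for the abelian extension $K/E$'' that you invoke is precisely the hard technical content (essentially Fr\"ohlich's theory of central class fields), not a routine variant. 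Moreover, even granting such a formula, you concede that you cannot bound the unit-index term, and a lower bound on the \emph{order} of the ambiguous class group would not by itself produce a subgroup isomorphic to $(\mathbb{Z}/n\mathbb{Z})^r$: you need rank information, which requires controlling the structure of the strongly ambiguous classes as well. So there is a genuine gap at the crux of the argument.

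For comparison, the paper sidesteps this difficulty in two independent ways. The first proof never attempts to lift genus-theoretic $n$-torsion from a degree-$n$ field: it obtains the odd part of $G$ from real quadratic fields (Yamamoto, Weinberger) and the $2$-part from cyclic cubic fields (Uchida), so that the relevant prime $q$ never divides the degree of the auxiliary field over the base cyclotomic field; a short Hilbert-class-field and ramification argument (Proposition 3) then shows that the norm from $Cl_{\mathbb{Q}(\zeta_n)}$ is surjective on $q$-parts. The second proof is the closest in spirit to yours --- it also takes $f$ a product of $t$ primes congruent to $1$ modulo $2M$ --- but it works cohomologically over $\mathbb{Q}$ itself rather than over a degree-$n$ base $E$: there the unit contribution costs at most one generator (Dirichlet over $\mathbb{Q}$), the local contributions cost at most $t$ generators (one Schur multiplier generator per ramified prime), and the gain is $\widehat{H}^{-1}(G,C_K)\cong H_2(G,\mathbb{Z})\cong G\wedge G$, which contains $(\mathbb{Z}/M\mathbb{Z})^{\binom{t}{2}}$ and hence grows quadratically in $t$ while the losses grow linearly. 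If you want to complete your approach, the cleanest route is to abandon the ambiguous-class computation over $E$ and instead compute $\widehat{H}^{-1}(\mathrm{Gal}(K/\mathbb{Q}),Cl_K)$ over $\mathbb{Q}$ as the paper does.
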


In the following, we first give a proof by lifting class groups from quadratic and cubic fields. Then, in the last section, we give  a second, completely different proof,
where the desired subgroup is constructed by a cohomological approach inspired by the theory of central class fields (see \cite{Fr}).

Our first construction
yields infinitely many fields with no containment relations among them. Our second proof shows that if we let $S$ be a finite set of primes, 
there is a real cyclotomic field $\mathbb Q(\zeta_n)^+$ with $G$ in its class group
such that $\mathbb Q(\zeta_n)^+/\mathbb Q$ is unramified at the primes in $S$.

\section{\bf Behavior of Class Groups under Field Extensions}
For a number field $F$, let $Cl_F$ denote the ideal class group of $F$.
\begin{lemma}\label{fundlemma}
Let $K$ and $L$ be Galois extensions of $\mathbb Q$ with $K\cap L=\mathbb Q$. Let $m=[K : \mathbb Q]$ and $n=[L : \mathbb Q]$.  Then $Cl_{KL}$ 
contains a subgroup isomorphic to
$Cl_{K}^{\, n}\oplus Cl_L^{\, m}$. 
\end{lemma}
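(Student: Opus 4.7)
The plan is to exhibit the desired subgroup by combining the two norm maps from $N := KL$ down to $K$ and $L$. By the linear disjointness hypothesis, $[N:K] = n$ and $[N:L] = m$, and restriction gives isomorphisms $\mathrm{Gal}(N/K) \cong \mathrm{Gal}(L/\mathbb{Q})$ and $\mathrm{Gal}(N/L) \cong \mathrm{Gal}(K/\mathbb{Q})$.

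First I would consider the joint norm map
\[
\Psi = (N_{N/K},\, N_{N/L}) \colon Cl_N \longrightarrow Cl_K \oplus Cl_L.
\]
Writing $j_K \colon Cl_K \to Cl_N$ and $j_L \colon Cl_L \to Cl_N$ for the ideal-extension maps, the diagonal identities $N_{N/K}\circ j_K = n$ and $N_{N/L}\circ j_L = m$ are standard. The key arithmetic input is the vanishing of the off-diagonal compositions $N_{N/K}\circ j_L \equiv 0$ and $N_{N/L}\circ j_K \equiv 0$. For an ideal $\mathfrak{b}$ of $L$, using that $N/K$ is Galois, I would compute
\[
N_{N/K}(\mathfrak{b}\mathcal{O}_N)\,\mathcal{O}_N \;=\; \prod_{\sigma \in \mathrm{Gal}(N/K)} \sigma(\mathfrak{b})\,\mathcal{O}_N \;=\; \prod_{\tau \in \mathrm{Gal}(L/\mathbb{Q})} \tau(\mathfrak{b})\,\mathcal{O}_N \;=\; N_{L/\mathbb{Q}}(\mathfrak{b})\,\mathcal{O}_N,
\]
where the middle equality uses the restriction isomorphism. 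Since $N_{L/\mathbb{Q}}(\mathfrak{b}) \in \mathbb{Z}$, the ideal $N_{N/K}(\mathfrak{b}\mathcal{O}_N) = N_{L/\mathbb{Q}}(\mathfrak{b})\mathcal{O}_K$ is principal in $K$, hence trivial in $Cl_K$. The symmetric statement gives the other off-diagonal vanishing.

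Combining these, $\Psi(j_K(c)+j_L(d)) = (nc,\, md)$ for all $c \in Cl_K$, $d \in Cl_L$, so the image of $\Psi$ contains $Cl_K^n \oplus Cl_L^m = n Cl_K \oplus m Cl_L$. Consequently, the preimage $\Psi^{-1}(Cl_K^n \oplus Cl_L^m) \leq Cl_N$ surjects onto $Cl_K^n \oplus Cl_L^m$. Since any quotient of a subgroup of a finite abelian group is itself isomorphic to a subgroup of the ambient group (compare invariant factors prime by prime), this yields the desired embedding $Cl_K^n \oplus Cl_L^m \hookrightarrow Cl_N$.

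The only real obstacle is the off-diagonal vanishing, where the restriction $\mathrm{Gal}(N/K) \to \mathrm{Gal}(L/\mathbb{Q})$ must be correctly identified as an isomorphism; this is precisely where the hypothesis $K \cap L = \mathbb{Q}$ enters essentially. The concluding appeal to the finite abelian group fact is formal but crucial, since the norm computation by itself only produces a surjection onto $Cl_K^n \oplus Cl_L^m$ rather than a direct embedding.
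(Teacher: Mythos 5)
Your proposal is correct and follows essentially the same route as the paper: the same joint norm map $(N_{KL/K}, N_{KL/L})$, the same computation showing the lift-then-norm compositions give multiplication by the degree on the diagonal and land in principal ideals off the diagonal, and the same final observation that a subquotient of a finite abelian group embeds as a subgroup.
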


\begin{proof} 

Consider the map
$$
\phi: Cl_{KL} \to Cl_K \oplus Cl_L, \mathbb \qquad  x \mapsto \left( N_{KL/K}(x), \, N_{KL/L}(x)\right).
$$
Let $I$ be an ideal of $K$. If we lift $I$ to $KL$ and then take its norm back to $K$, we obtain $I^n$. However, its norm from $KL$ to $L$ 
is the product of the conjugates of $I$ under $\text{Gal}(KL/L) = \text{Gal}(K/\mathbb Q)$,
hence is the lift of the norm of $I$ from $K$ to $\mathbb Q$. In particular, this norm is principal. It follows that the image of $\phi$ contains $Cl_K^{\, n} \oplus 0$.  
Similarly, it contains $0\oplus Cl_L^{\, m}$. Therefore,  it contains $Cl_K^{\, n}\oplus Cl_L^{\, m}$, so this group is a subquotient of $Cl_{KL}$. Therefore, it is also isomorphic to a subgroup
of $Cl_{KL}$.
\end{proof}

\begin{prop}\label{prop} Let $F$ be a number field with $\mathbb Q(\zeta_m)\subseteq F \subseteq \mathbb Q(\zeta_n)$ for some $m, n \ge 1$. Let $q$ be a prime
not dividing $[F : \mathbb Q(\zeta_m)]$. Then the norm map from the $q$-part of the ideal class group of $\mathbb Q(\zeta_n)$ to the $q$-part of the
ideal class group of $F$ is surjective.
\end{prop}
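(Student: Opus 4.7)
The plan is to translate the surjectivity of the norm into a statement about unramified abelian extensions inside $\mathbb Q(\zeta_n)$ via class field theory, and then exploit the Chinese Remainder decomposition of the cyclotomic Galois group. The dictionary I would invoke is standard: for any finite extension $K\subseteq L$ of number fields, the image of the norm $N_{L/K}:Cl_L\to Cl_K$ is identified under the Artin map with $\text{Gal}(H_K/L\cap H_K)\subseteq\text{Gal}(H_K/K)\cong Cl_K$, where $H_K$ is the Hilbert class field of $K$. Passing to $q$-Sylow subgroups, the norm is surjective on $q$-parts if and only if $L\cap H_K^{(q)}=K$, where $H_K^{(q)}$ denotes the maximal unramified abelian $q$-extension of $K$. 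Applied with $L=\mathbb Q(\zeta_n)$ and $K=F$, this reduces the proposition to showing that $F$ admits no nontrivial unramified abelian $q$-extension inside $\mathbb Q(\zeta_n)$.

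Writing $H=\text{Gal}(\mathbb Q(\zeta_n)/F)$ and, for each rational prime $p$, letting $I_p\subseteq\text{Gal}(\mathbb Q(\zeta_n)/\mathbb Q)$ be the inertia group at $p$ (well-defined because the extension is abelian), the maximal unramified subextension of $\mathbb Q(\zeta_n)/F$ is the fixed field of $\sum_p(I_p\cap H)$, so the task becomes to prove the containment $H^{(q)}\subseteq\sum_p(I_p\cap H)$ for the $q$-Sylow $H^{(q)}$ of $H$. Setting $K_{n,m}:=\text{Gal}(\mathbb Q(\zeta_n)/\mathbb Q(\zeta_m))$, the hypothesis $q\nmid[F:\mathbb Q(\zeta_m)]=[K_{n,m}:H]$ forces $H^{(q)}$ and $K_{n,m}^{(q)}$ to have the same order inside the abelian group $K_{n,m}$, hence $H^{(q)}=K_{n,m}^{(q)}$.

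Finally I would use the CRT isomorphism $\text{Gal}(\mathbb Q(\zeta_n)/\mathbb Q)\cong\prod_{p\mid n}(\mathbb Z/p^{a_p}\mathbb Z)^{\times}$, with $n=\prod_{p\mid n} p^{a_p}$, which realizes each $I_p$ as exactly the $p$-th factor, so that $K_{n,m}$ decomposes as $\prod_p K_p$ with $K_p:=K_{n,m}\cap I_p$ and $K_{n,m}^{(q)}=\sum_p K_p^{(q)}$. Each $K_p^{(q)}$ lies in $I_p$ (since $K_p\subseteq I_p$) and also in $H$ (since $K_p^{(q)}\subseteq K_{n,m}^{(q)}=H^{(q)}\subseteq H$), giving $K_p^{(q)}\subseteq I_p\cap H$. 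Summing over $p$ yields $H^{(q)}=\sum_p K_p^{(q)}\subseteq\sum_p(I_p\cap H)$, as required. The main conceptual step, and the one I expect to be the real obstacle to articulate cleanly, is observing that the CRT makes the inertia subgroups at distinct primes ``diagonal'' pieces of the product: once the hypothesis $q\nmid[F:\mathbb Q(\zeta_m)]$ pulls the whole $q$-Sylow of $K_{n,m}$ into $H$, that Sylow is automatically generated by inertia subgroups, so it produces no unramified abelian $q$-extension of $F$ inside $\mathbb Q(\zeta_n)$.
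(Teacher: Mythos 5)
Your argument is correct, and it shares its skeleton with the paper's proof: both translate surjectivity of the norm on $q$-parts, via the Artin map, into the statement that $\mathbb Q(\zeta_n)$ meets the Hilbert $q$-class field of $F$ trivially, and both establish this by showing that the $q$-Sylow subgroup of $\mathrm{Gal}(\mathbb Q(\zeta_n)/F)$ is generated by inertia subgroups. Where you differ is in how that last fact is proved. The paper runs an induction over the prime factors of $n/m$, adjoining $\zeta_{mp}$ one prime $p$ at a time and using that $\mathbb Q(\zeta_{mp})/\mathbb Q(\zeta_m)$ is totally ramified at $p$ to force every subextension of $K(\zeta_{mp})/K$ of $q$-power degree to be ramified; your proof is a single global computation, using the Chinese Remainder decomposition of $\mathrm{Gal}(\mathbb Q(\zeta_n)/\mathbb Q)$ to see that $\mathrm{Gal}(\mathbb Q(\zeta_n)/\mathbb Q(\zeta_m))$ splits as the product of its intersections with the inertia groups $I_p$, whence its $q$-Sylow subgroup --- which equals $H^{(q)}$ by the index hypothesis --- already lies in $\sum_p(I_p\cap H)$. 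Your version avoids the induction and is arguably cleaner in the purely cyclotomic setting, at the cost of being tied to it: the paper's intermediate statement applies to any Galois extension $K/\mathbb Q(\zeta_m)$ of degree prime to $q$, not just to subfields of $\mathbb Q(\zeta_n)$, because the total-ramification argument never needs the field to be abelian over $\mathbb Q$. One small point worth making explicit in your write-up: the Hilbert class field is also unramified at the infinite primes, but since you prove the stronger statement that the maximal subextension of $\mathbb Q(\zeta_n)/F$ unramified at all \emph{finite} primes already has degree prime to $q$ over $F$, the desired conclusion $\mathbb Q(\zeta_n)\cap H_F^{(q)}=F$ follows a fortiori.
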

\begin{proof} 
We'll prove the following statement: Let $K$ be a Galois extension of $\mathbb Q(\zeta_m)$ for some $m\ge 1$ and let $p$ be a prime. Let $q$ be a prime
not dividing $[K: \mathbb Q(\zeta_m)]$. Then the norm map from the $q$-part of the ideal class group of $K(\zeta_{mp})$ to the $q$-part of the ideal class group of $K$
is surjective.

Since $[F(\zeta_{mr}) : \mathbb Q(\zeta_{mr})]$ divides $[F : \mathbb Q(\zeta_m)]$ for every $r\ge 1$, the statement implies the proposition by induction
by letting $K$ run through the tower of fields
$$
F \subseteq F(\zeta_{mp_1}) \subseteq F(\zeta_{mp_1p_2}) \subseteq \cdots \subseteq F(\zeta_n) = \mathbb Q(\zeta_n),
$$
where $p_1, p_2, \dots$ are the primes (with multiplicity) dividing $n/m$. 

We now prove the statement. The extension $\mathbb Q(\zeta_{mp})/\mathbb Q(\zeta_m)$ is totally ramified at $p$. Fix a prime of $K(\zeta_{mp})$ above $p$.
Its ramification index for $K(\zeta_{mp})/\mathbb Q(\zeta_m)$ equals its ramification index for $K(\zeta_{mp})/\mathbb Q(\zeta_{mp})$ times
its ramification index for $\mathbb Q(\zeta_{mp})/\mathbb Q(\zeta_m)$.
Since $q\nmid [K(\zeta_{mp}) : \mathbb Q(\zeta_{mp})]$, the $q$-part of the ramification index for $K(\zeta_{mp})/\mathbb Q(\zeta_m)$
equals the $q$-part of the ramification index for $\mathbb Q(\zeta_{mp})/\mathbb Q(\zeta_{m})$, which is the $q$-part of $[\mathbb Q(\zeta_{mp}) : \mathbb Q(\zeta_m)]$. It follows easily that the $q$-part of the inertia subgroup 
of $\text{Gal}(K(\zeta_{mp})/K)$ for this prime  is isomorphic to the $q$-part of its inertia subgroup in $\text{Gal}(\mathbb Q(\zeta_{mp})/\mathbb Q(\zeta_m))$.  Therefore 
the inertia subgroup has index prime to $q$ in $\text{Gal}(K(\zeta_{mp})/ K)$, which means that every subextension of $K(\zeta_{mp})/ K$ of $q$-power degree is totally ramified at the primes above $p$.

\begin{align*}
\xymatrix{
 & 
K(\zeta_{mp})\ar@{-}[dl]^{\text{no }q}\ar@{-}[dd]\\
 \mathbb Q(\zeta_{mp})\ar@{-}[dd]& \\
  & K\ar@{-}[dl]^{\text{no }q}\\
 {\mathbb Q(\zeta_m)}
}
\end{align*}

Let $H_q$ be the Hilbert $q$-class field of $K$; that is, $H_q$ is the maximal unramified abelian
$q$-extension of $K$. 
Then $H_q \cap K(\zeta_{mp}) / K$ is an unramified $q$-extension and therefore is trivial. 

Therefore, the Galois group of the $q$-class field of $K(\zeta_{mp})$ surjects by restriction onto $\text{Gal}(H_q/K)$. Restriction corresponds to the norm map on the 
$q$-parts of the ideal class groups
(see \cite[Appendix, Section 3, Theorem 5]{LCW}), so the norm map from the $q$-part of the ideal class group of $K(\zeta_{mp})$ to the $q$-part of the ideal class group
of $K$ is surjective. This proves the statement and the proposition. \end{proof}

\section{\bf Proof of Theorem 1}

Write $G= G_2 \oplus G_2'$, where $G_2$ is a 2-group, $G_2'$ has odd order.

It was proved by Yamamoto \cite{YY} and Weinberger \cite{We} that,  for any given $r$, there are infinitely many 
real quadratic fields with elements of order $r$ in their class groups. 
By Lemma \ref{fundlemma},
we can inductively construct a compositum $F'$ of real quadratic fields such that $Cl_{F'}$ contains a subgroup isomorphic to $G_2'$.
Let $n_1$ be the conductor of $F'$, so $F'\subseteq \mathbb Q(\zeta_{n_1})^+$. 
By Proposition \ref{prop},  the odd part of the class group of $\mathbb Q(\zeta_{n_1})$ surjects onto the odd part of the class group of $F'$ under the norm.

By a result of Uchida \cite{U},  for any given $r$, there are infinitely many cyclic cubic fields with elements of order $r$ in their class groups. 
By Lemma \ref{fundlemma},
we can inductively construct a compositum $F$ of cyclic cubic fields such that $Cl_F$ contains a subgroup isomorphic to $G_2$. We have $F\subseteq \mathbb Q(\zeta_{n_2})$
for some $n_2$.
Proposition \ref{prop} shows that the 2-part of the class group of $\mathbb Q(\zeta_{n_2})$ surjects via the norm onto the 2-part of the class group of $F$.

Let $n$ be the least common multiple of $n_1$ and $n_2$. By Proposition \ref{prop}, the norm maps the class group of $\mathbb Q(\zeta_n)$ onto the class group of $\mathbb Q(\zeta_{n_1})$. Therefore, the odd part of the class group
of $\mathbb Q(\zeta_n)$ surjects via the norm onto the odd part of the class group of $F'$. Since the norm factors through $\mathbb Q(\zeta_n)^+$,
the odd part of the class group of $\mathbb Q(\zeta_n)^+$ surjects onto the odd part of $Cl_{F'}$. Therefore, $Cl_{\mathbb Q(\zeta_n)^+}$ contains a subgroup
isomorphic to $G_2'$. 
The same argument shows that $Cl_{\mathbb Q(\zeta_n)^+}$ contains a subgroup
isomorphic to $G_2$, so the class group of $\mathbb Q(\zeta_n)^+$ contains a subgroup isomorphic to $G$. It is clear from the construction that we obtain infinitely many such fields. This completes the proof.

\section{\bf Remarks}

Here are a few examples related to Proposition \ref{prop} that  show  that the assumption $q\nmid [F : \mathbb Q(\zeta_m)]$ is needed. 

\begin{itemize}

\item[(i)] Consider $\mathbb{Q}(\zeta_{420})^+$. Then $\mathbb{Q}(\sqrt{105}) \subset \mathbb{Q}(\zeta_{420})^+$ and the class number of $\mathbb{Q}(\sqrt{105})$ is $2$. From \cite{JCM3}, we have the class number of $\mathbb{Q}(\zeta_{420})^+$ equal to $1$. (In \cite{IY}, it is claimed that for a square-free integer $m$, the 
class number of $\mathbb{Q}(\sqrt{m})$ divides the class number of $\mathbb{Q}(\zeta_{4m})^+$. This result is not true. This example and the next serve as counterexamples. In \cite{Os}, it is shown how to correct the statement.)

\item[(ii)] Again from \cite{JCM3}, the class numbers of $\mathbb{Q}(\zeta_{220})^+$ and $\mathbb{Q}(\sqrt{55})$ are equal to $1$ and $2$, respectively. Also $\mathbb{Q}(\sqrt{55}) \subset \mathbb{Q}(\zeta_{220})^+$. 

\item[(iii)] The cyclic cubic  field defined by the polynomial $x^3 - 44x^2 + 524x - 944$ has class number 3 and is contained in $\mathbb Q(\zeta_{91})^+$, which has class number 1
(see \cite{vdL}). This shows that the 3-part of the class group of a cubic field can disappear when lifted to a cyclotomic field.

\end{itemize} 

 \section{\bf Strengthening Proposition 3}

By using some more machinery, we can sharpen the statement of Proposition 3. 

\begin{thm}\label{normthm} Let $F$ be a number field with $\mathbb Q(\zeta_m)\subseteq F \subseteq \mathbb Q(\zeta_n)$ for some $m, n \ge 1$ and let 
$d=[F : \mathbb Q(\zeta_m)]$. Let $N$ be the norm map from $Cl_{\mathbb Q(\zeta_n)}$ to $Cl_F$. Then $C_F/\text{Image}(N)$ has exponent diving $d$.
\end{thm}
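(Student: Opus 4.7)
The plan is to apply global class field theory directly to the abelian extension $\mathbb{Q}(\zeta_n)/F$ in order to rewrite $Cl_F/\mathrm{Image}(N)$ as a concrete Galois group, and then to extract the exponent bound from the Chinese Remainder Theorem decomposition of $\mathrm{Gal}(\mathbb{Q}(\zeta_n)/\mathbb{Q}(\zeta_m))$. An inductive approach through the cyclotomic tower used in Proposition \ref{prop} would pick up a factor of $[K(\zeta_{mp}):K]$ at each step and yield a much weaker bound, so it seems cleaner to work globally.

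Write $L = \mathbb{Q}(\zeta_n)$, $G = \mathrm{Gal}(L/F)$, and $H = \mathrm{Gal}(L/\mathbb{Q}(\zeta_m))$, so $G \leq H$ with $[H:G] = d$. Since $L/F$ is abelian, global class field theory identifies $Cl_F/\mathrm{Image}(N)$ with $\mathrm{Gal}((L \cap H_F)/F) \cong G/T$, where $H_F$ is the (ordinary) Hilbert class field of $F$ and $T \leq G$ is the subgroup generated by the inertia subgroups at all places of $F$. Enlarging $T$ only shrinks the quotient, so it suffices to prove that $G/T^{\mathrm{fin}}$, with $T^{\mathrm{fin}}$ generated only by the inertia at finite primes, has exponent dividing $d$.

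The essential input is the following product decomposition. The inclusion $H \leq (\mathbb{Z}/n\mathbb{Z})^* \cong \prod_{\ell \mid n}(\mathbb{Z}/\ell^{v_\ell(n)}\mathbb{Z})^*$ factors $H$ as a direct product $H = \prod_{\ell \mid n} H_\ell$. Using that $\mathbb{Q}(\zeta_{\ell^{v_\ell(n)}})/\mathbb{Q}(\zeta_{\ell^{v_\ell(m)}})$ is totally ramified at $\ell$, a direct check shows that the factor $H_\ell$, embedded in $H$ in the natural way, equals the inertia subgroup of $H$ at primes above $\ell$. Consequently the inertia of $G$ at primes above $\ell$ is $G \cap H_\ell$, and $T^{\mathrm{fin}}$ is generated by these intersections.

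The exponent bound then follows from a short group-theoretic argument. For $x \in G \leq \prod_\ell H_\ell$, write $x = \prod_\ell \widetilde{x}_\ell$ with $\widetilde{x}_\ell \in H$ supported in the $\ell$-factor $H_\ell$. Although $\widetilde{x}_\ell$ need not belong to $G$, Lagrange applied to the finite abelian group $H/G$ gives $\widetilde{x}_\ell^{\,d} \in G$; since also $\widetilde{x}_\ell^{\,d} \in H_\ell$, we get $\widetilde{x}_\ell^{\,d} \in G \cap H_\ell \subseteq T^{\mathrm{fin}}$. Multiplying over $\ell$ yields $x^d \in T^{\mathrm{fin}}$, giving the desired exponent bound. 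The main obstacle is verifying the identification in the previous paragraph of each CRT factor $H_\ell$ with the $\ell$-inertia subgroup of $H$; once that is in hand, both the class field theory reduction and the final group-theoretic step are formal.
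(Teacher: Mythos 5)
Your proof is correct, and it reaches the theorem by a route that is essentially the Pontryagin dual of the paper's, packaged quite differently. The paper first proves a standalone genus-theory proposition using Leopoldt's theory of Dirichlet characters: writing $\chi=\prod_p\chi_p$ with $\chi_p$ of $p$-power conductor, identifying the groups $X_p$ of $p$-components with ramification, and observing that $\chi^d$ (hence each $\chi_p^d$) is a character of $\mathbb Q(\zeta_m)$; this shows the maximal extension $K'$ of $F$ unramified everywhere and abelian over $\mathbb Q$ has $\mathrm{Gal}(K'/\mathbb Q(\zeta_m))$ of exponent dividing $d$, and the theorem follows by squeezing $\mathbb Q(\zeta_n)\cap H_F$ between $K'$ and $H_F$. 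Your decomposition $H=\prod_\ell H_\ell$ with $H_\ell$ the inertia at $\ell$, together with the step $\widetilde x_\ell^{\,d}\in G\cap H_\ell$, is exactly this computation transported to the Galois side: the congruence $a\equiv 1\pmod m$ is a componentwise condition in $\prod_\ell(\mathbb Z/\ell^{v_\ell(n)}\mathbb Z)^*$, which is why the product decomposition is compatible with $H$ and why $A_\ell\cap H=H_\ell$ really is the inertia subgroup (that one sentence is the crux and is worth writing out). What your version buys is economy: you never leave $\mathbb Q(\zeta_n)$, the cokernel of $N$ is identified directly with $\mathrm{Gal}(\mathbb Q(\zeta_n)/F)$ modulo inertia via the same norm--restriction compatibility the paper cites from Washington's book, and no genus field or auxiliary maximal unramified extension is needed. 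What the paper's route buys is the intermediate genus-theory statement itself, which bounds the exponent of $\mathrm{Gal}(K/\mathbb Q(\zeta_m))$ for the full genus field and is of independent interest beyond the cokernel of one particular norm map. Your concern that the inductive tower argument of Proposition \ref{prop} would only give a weaker bound is also well taken; the global argument (in either form) is what gives the clean exponent $d$.
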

\begin{proof} 

We start with a result that is part of what is known as ``genus theory.''

\begin{prop} Let $F$ be as in Theorem \ref{normthm} and let $K$ be the maximal extension of $F$ that is unramified at all finite primes and 
is abelian over $\mathbb Q$. Then $\text{Gal}(K/\mathbb Q(\zeta_m))$ has exponent dividing $d$.
\end{prop}

\begin{proof} A Dirichlet character $\chi$ can be written uniquely in the form $\chi = \prod \chi_p$, where $p$ runs through the primes and
the character $\chi_p$ has  conductor a power of $p$. 
Let $X$ be the group of Dirichlet characters associated to $F$. For each prime $p$, let $X_p$ be the group of Dirichlet characters generated by the characters $\chi_p$
as $\chi$ runs through the characters in $X$. Leopoldt's character theory (see \cite[Chapter 3]{LCW}) says that the ramification index for $p$ in $F/\mathbb Q$ 
is $e=\#X_p$. Let $\widetilde{X}$ be the group of Dirichlet characters generated by the set of $X_p$ as $p$ runs through the primes, and let
$L$ be the number field corresponding to $\widetilde{X}$. Since $X_p$ is the set of $p$-components of the characters in $\widetilde{X}$, we have
$\widetilde{X}_p=X_p$, so $L/F$ is unramified at all finite primes.

Clearly $L\subseteq K$.   Let $Y$ be the set of Dirichlet characters associated to $K$.  Since $K/L$ is unramified, we must have $Y_p =\widetilde{X}_p$ for each $p$.
Since $\widetilde{X}$ contains each $X_p$, we must have $\widetilde{X}\supseteq Y$, which implies that $K\subseteq L$, hence $K=L$. 

Since $\mathbb Q(\zeta_m)$ is the compositum of the prime power cyclotomic fields that it contains, the characters of $\mathbb Q(\zeta_m)$ are products of the characters of prime power
conductor associated to $\mathbb Q(\zeta_m)$.  If $\chi\in X$, then $\chi^d$ is a character associated to $\mathbb Q(\zeta_m)$. 
Write $\chi=\prod \chi_p$, so $\chi^d = \prod \chi_p^d$. The $p$-component is $\chi_p^d$, which is therefore a character of $\mathbb Q(\zeta_m)$. 
It follows that the $d$th power of each character in $\widetilde{X}$ is a character of $\mathbb Q(\zeta_m)$. Therefore, 
$\text{Gal}(K/\mathbb Q(\zeta_m))$ is annihilated by
the $d$th power map. \end{proof}

\begin{cor}  Let $K'$ be the maximal extension of $F$ that is unramified at all primes and is abelian over $\mathbb Q$. Then $\text{Gal}(K'/\mathbb Q(\zeta_m))$ has exponent dividing $d$.
\end{cor}

\begin{proof} Since $K'\subseteq K$, the corollary follows from the proposition. \end{proof}

We can now finish the proof of Theorem \ref{normthm}.  Let $H_F$ be the Hilbert class field of $F$ and $H_n$ be the Hilbert class field of $\mathbb Q(\zeta_n)$. Then
$$
\text{Gal}(H_n/\mathbb Q(\zeta_n)) \twoheadrightarrow \text{Gal}(H_F(\zeta_n)/\mathbb Q(\zeta_n)) \simeq \text{Gal}(H_F/(\mathbb Q(\zeta_n)\cap H_F).
$$
Restriction corresponds to the norm on ideal class groups. The corollary implies that 
$$\text{Gal}(H_F/F)^d\subseteq \text{Gal}(H_F/K')\subseteq \text{Gal}(H_F/(\mathbb Q(\zeta_n)\cap H_F)),$$
so we find that $C_F^d$ is in the image of $N$. \end{proof}

\section{\bf A cohomological approach}

In this section we prove Theorem \ref{thm1} using a completely different approach.

Let $k\subset K$ be a finite Galois extension of number fields with $G={\rm Gal}(K/k)$.
The Tate cohomology group  $\widehat
H^{-1}(G,Cl_K)$ of the class group $Cl_K$ is isomorphic to $Cl_K[N]/I\,Cl_K$ where
$Cl_K[N]$ denotes the kernel of the $G$-norm map $N:Cl_K\rightarrow Cl_K$ and
$I\,Cl_K$ denotes the group generated by the
$\sigma(x)/x$ for $\sigma\in G$ and $x\in Cl_K$.
In particular,  $\widehat H^{-1}(G,Cl_K)$ is  a subquotient of $Cl_K$. We show that it may be large.
\medskip

Let $O_K^*$ denote the unit group of the ring of integers of $K$.
Let $I_K$ denote the group of fractional ideals and $P_K$ its subgroup of principal fractional ideals.
Let $\AA_K^*$ denote the idele class group, $U_K$ its subgroup of unit ideles, and $C_K$ the group of idele classes.
Put $Q_K=U_K/O_K^*$. Then we have the following commutative diagram with exact rows and columns \cite[IX, Section 3]{CF}:
\begin{equation}\label{mainCD}
\begin{CD} 
@. 1 @. 1 @. 1 @. \\
@. @VVV @VVV @VVV @.\\
1 @>>> {\mathcal O}_K^{\times} @>>> K^{\times} @>>> P_K @>>> 1\\
@. @VVV @VVV @VVV @.\\
1 @>>> U_K @>>> \AA_K^*@>>> I_K @>>>1\\
@. @VVV @VVV @VVV @. \\
1 @>>> Q_K @>>> C_K @>>> Cl_K @>>> 1\\
@. @VVV @VVV @VVV @.\\
@. 1 @. 1 @. 1 @.
\end{CD}
\end{equation}
The Galois group $G$ acts on the groups in this diagram.
The long exact  Tate cohomology  sequences of the rows and columns give rise to an infinite commutative diagram,
a piece of which is the following commutative diagram with exact rows:
\smallskip
\begin{equation}\label{2ndCD}
\begin{CD}
\widehat{H}^{-1}(G,U_K) @>>>  \widehat{H}^{-1}(G,\AA^*_K) @>>> 0\\
@VV\phi V@VV\psi V @VVV \\
\widehat{H}^{-1}(G,Q_K) @>>> \widehat{H}^{-1}(G,C_K) @>>> \widehat{H}^{-1}(G,Cl_K)
\end{CD}
\end{equation}
\smallskip\noindent
Note that the group $\widehat{H}^{-1}(G,I_K)$ is zero. This follows from Shapiro's lemma and the fact that the cohomology groups $\widehat{H}^{-1}(G_w,\mathbb Z)$ vanish.
See \cite[IV,  Prop. 2]{CF} and \cite[IX, Section 3]{CF}. Here $w$ denotes a prime of $K$ lying over a prime $v$ of $k$ and $G_w$ denotes the local Galois group ${\rm Gal}(K_w/k_v)$.

Let $r_v$  be the cardinality of a minimal set of generators of the `Schur multiplier' $H_2(G_w,\mathbb Z)=\widehat{H}^{-3}(G_w,\mathbb Z)$, which is independent of the choice of $w$ above $v$.

\begin{prop}   \label{propSchur}
The group  $\widehat{H}^{-1}(G,Cl_K)$ admits a subquotient that is isomorphic to the homology group
$H_2(G,\mathbb Z)$ modulo a subgroup generated by at most $n+\sum_v r_v$ elements. Here  $n=[k:\mathbb Q]$ and $v$ runs over the  finite ramified primes of $k$.
\end{prop}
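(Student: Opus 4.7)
The plan is to combine Tate's theorem on global class field theory with a generator count along the bottom of diagram \eqref{2ndCD}. The starting point is Tate's global duality isomorphism: cupping with the fundamental class of $K/k$ gives $\widehat{H}^{-1}(G,C_K) \cong \widehat{H}^{-3}(G,\mathbb{Z}) = H_2(G,\mathbb{Z})$. Extending the bottom row of \eqref{2ndCD} one step to the right produces an exact sequence $\widehat{H}^{-1}(G,Q_K) \xrightarrow{\alpha} H_2(G,\mathbb{Z}) \xrightarrow{\beta} \widehat{H}^{-1}(G,Cl_K)$, so the image of $\beta$ is a subgroup of $\widehat{H}^{-1}(G,Cl_K)$ isomorphic to $H_2(G,\mathbb{Z})/\operatorname{im}(\alpha)$, which is precisely the subquotient in the statement. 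The task reduces to bounding the number of generators of $\operatorname{im}(\alpha)$ by $n+\sum_v r_v$.

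For this I would use the long exact cohomology sequence of the left column of \eqref{mainCD}, namely $\widehat{H}^{-1}(G,U_K) \xrightarrow{\gamma} \widehat{H}^{-1}(G,Q_K) \xrightarrow{\partial} \widehat{H}^{0}(G,{\mathcal O}_K^{\times})$, which shows that $\widehat{H}^{-1}(G,Q_K)$ is generated by $\operatorname{im}(\gamma)$ together with lifts of generators of $\operatorname{im}(\partial) \subseteq \widehat{H}^{0}(G,{\mathcal O}_K^{\times})$. Applying $\alpha$, commutativity of \eqref{2ndCD} combined with the surjectivity $\widehat{H}^{-1}(G,U_K) \twoheadrightarrow \widehat{H}^{-1}(G,\AA_K^*)$ in the top row (which follows from $\widehat{H}^{-1}(G,I_K)=0$) identifies $\alpha(\operatorname{im}(\gamma))$ with the image of $\widehat{H}^{-1}(G,\AA_K^*)$ in $\widehat{H}^{-1}(G,C_K)$. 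By Shapiro's lemma and local Tate duality, $\widehat{H}^{-1}(G,\AA_K^*) \cong \bigoplus_v \widehat{H}^{-1}(G_w,K_w^{\times}) \cong \bigoplus_v H_2(G_w,\mathbb{Z})$; since decomposition groups at unramified and at archimedean primes are cyclic, hence have trivial Schur multiplier, only finite ramified $v$ contribute, so this piece is generated by at most $\sum_v r_v$ elements.

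It remains to bound $\widehat{H}^{0}(G,{\mathcal O}_K^{\times}) = {\mathcal O}_k^{\times}/N_{K/k}({\mathcal O}_K^{\times})$, which is a quotient of ${\mathcal O}_k^{\times}$. By Dirichlet's unit theorem, ${\mathcal O}_k^{\times}$ is generated by $r_1(k)+r_2(k) \le [k:\mathbb{Q}] = n$ elements. Summing the two bounds yields at most $n+\sum_v r_v$ generators for $\operatorname{im}(\alpha)$, as required. The conceptual heart of the argument is the pair of Tate duality identifications, global and local; after these are in place the diagram chase is essentially forced. The only subtle point is to check that one is identifying the correct subquotient of $\widehat{H}^{-1}(G,Cl_K)$, and that cyclic decomposition groups really do restrict the local sum to the finite ramified primes.
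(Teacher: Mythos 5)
Your proof is correct and follows essentially the same route as the paper: the same diagram, the identification $\widehat{H}^{-1}(G,C_K)\cong H_2(G,\mathbb Z)$ via the global fundamental class, Shapiro's lemma plus local class field theory to reduce $\widehat{H}^{-1}(G,\AA_K^*)$ to the Schur multipliers at the ramified primes, and Dirichlet's unit theorem to bound the contribution of $\widehat{H}^{0}(G,\mathcal O_K^{\times})$. The only difference is organizational — you bound the generators of $\operatorname{im}(\alpha)$ directly, whereas the paper packages the same computation as an exact sequence of cokernels $\operatorname{cok}\phi\to\operatorname{cok}\psi\to\widehat{H}^{-1}(G,Cl_K)$ — and the two descriptions yield the same subquotient.
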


\begin{proof}  An easy diagram chase shows that the induced sequence of the cokernels
of the vertical arrows in the diagram (\ref{2ndCD})  is exact:
$$
{\rm cok}\,\phi\longrightarrow{\rm cok}\,\psi\longrightarrow\widehat{H}^{-1}(G,Cl_K).
$$
It follows that the group $\widehat{H}^{-1}(G,Cl_K)$ admits a subquotient isomorphic to the quotient of ${\rm cok}\,\psi$ 
by the image of ${\rm cok}\,\phi$. The group 
${\rm cok}\,\psi$ is isomorphic to $\widehat{H}^{-1}(G,C_K)/\psi\widehat{H}^{-1}(G,\AA^*_K)$.
By class field theory, $\widehat{H}^{-1}(G,C_K)$ is isomorphic to $\widehat{H}^{-3}(G,\mathbb Z)=H_2(G,\mathbb Z)$ (see \cite[VII, Section 11.3]{CF}).
Similarly, by Shapiro's lemma $\widehat{H}^{-1}(G,\AA^*_K)$ is the sum over the primes $v$ of $k$ 
of the Schur multipliers $H_2(G_w,\mathbb Z)$ of the local Galois groups $G_w$.
Note that for infinite primes and for unramified primes $v$, the group $G_w$ is cyclic and hence $H_2(G_w,\mathbb Z)$ is zero.
Therefore $\widehat{H}^{-1}(G,\AA^*_K)$ is a  sum over the ramified primes $v$ of $k$ of the groups~$H_2(G_w,\mathbb Z)$.
It follows that $\psi \widehat{H}^{-1}(G,\AA^*_K)$ can be generated by $\sum_vr_v$ elements.

By the exactness of the leftmost column of the diagram (\ref{mainCD}), the group ${\rm cok}\,\phi$ is a subgroup 
of $\widehat{H}^0(G,O_K^*)$, which in turn is a quotient of the unit group $O_k^*$.
By Dirichlet's unit theorem, the group $O_k^*$  can be generated by at most $n=[k:\mathbb Q]$ elements.
Therefore the same is true for~${\rm cok}\,\phi$. This proves the proposition. \end{proof}

\medskip
Now we specialize to $k=\mathbb Q$ and $K$ a finite abelian extension of $\mathbb Q$. 
Since the Galois group $G={\rm Gal}(K/k)$ is abelian, its Schur multiplier $H_2(G, \mathbb Z)$ is isomorphic to the exterior 
product~$G\wedge G$. See \cite[Thm.6.4]{Brown} and \cite[Thm. 3]{CM}.

\begin{cor}  Let $t\ge 4$ and $M\ge 2$ and let $f$ be a product of $t$ distinct primes that are congruent to $1$ modulo $M$.
Then the class group of the cyclotomic field $\mathbb Q(\zeta_f)$ admits a subgroup isomorphic to $(\mathbb Z/M\mathbb Z)^T$, where $T=\frac{t^2-3t-2}{2}$.
\end{cor}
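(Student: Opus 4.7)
The plan is to apply Proposition~\ref{propSchur} with $k=\mathbb Q$ and $K=\mathbb Q(\zeta_f)$, then to extract $(\mathbb Z/M\mathbb Z)^T$ from the resulting subquotient by means of Smith normal form over each local ring $\mathbb Z/\ell^a\mathbb Z$, and finally to pass from a subquotient of $Cl_K$ to a subgroup of $Cl_K$.

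The Galois group $G:={\rm Gal}(K/\mathbb Q)$ factors by the Chinese remainder theorem as $\prod_{i=1}^t\mathbb Z/(p_i-1)\mathbb Z$. Since $G$ is abelian, $H_2(G,\mathbb Z)\cong G\wedge G$; the standard formula for the exterior square of a direct sum of cyclic groups gives
$$H_2(G,\mathbb Z)\cong\bigoplus_{1\le i<j\le t}\mathbb Z/\gcd(p_i-1,p_j-1)\mathbb Z,$$
and since $M\mid(p_i-1)$ for every $i$, each of these $\binom{t}{2}$ cyclic summands has order divisible by $M$. For the bound on $n+\sum_v r_v$: $n=[\mathbb Q:\mathbb Q]=1$; the finite ramified primes are $p_1,\dots,p_t$; at each $p_i$ the completion $\mathbb Q_{p_i}(\zeta_f)$ is the compositum of the totally ramified cyclic extension $\mathbb Q_{p_i}(\zeta_{p_i})/\mathbb Q_{p_i}$ with the unramified (hence cyclic) extension $\mathbb Q_{p_i}(\zeta_{f/p_i})/\mathbb Q_{p_i}$, so $G_{w_i}$ is a product of at most two cyclic groups, its Schur multiplier $G_{w_i}\wedge G_{w_i}$ is cyclic, and $r_{v_i}\le 1$. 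Thus $n+\sum_v r_v\le t+1$, and Proposition~\ref{propSchur} yields a subquotient of $Cl_K$ isomorphic to $H_2(G,\mathbb Z)/B$ for some subgroup $B$ generated by at most $t+1$ elements.

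The real work is in the last step. Passing further to the quotient by $M$, it suffices to show that for any $\mathbb Z/M\mathbb Z$-submodule $W\subseteq V=(\mathbb Z/M\mathbb Z)^{\binom{t}{2}}$ generated by at most $t+1$ elements, $V/W$ contains $(\mathbb Z/M\mathbb Z)^T$ as a subgroup. Decomposing $M$ into prime powers reduces to the case $M=\ell^a$; then $\mathbb Z/\ell^a\mathbb Z$ is a principal ideal ring, and Smith normal form writes the cokernel of the $\binom{t}{2}\times(t+1)$ presentation matrix of $W$ as $(\mathbb Z/\ell^a\mathbb Z)^{\binom{t}{2}-(t+1)}$ plus at most $t+1$ smaller cyclic summands; since $\binom{t}{2}-(t+1)=T$, the desired subgroup of $V/W$ exists. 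Finally, a finite abelian group contains $(\mathbb Z/M\mathbb Z)^T$ as a subgroup if and only if it contains it as a subquotient (pass to $\ell$-primary parts and compare invariant-factor counts), so $Cl_K$ itself has the desired subgroup. The only genuine obstacle is this linear-algebra step, which is handled cleanly once $M$ is broken into prime powers.
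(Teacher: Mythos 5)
Your proposal is correct and takes essentially the same approach as the paper: apply Proposition~\ref{propSchur} with $k=\mathbb Q$ and $K=\mathbb Q(\zeta_f)$, identify $H_2(G,\mathbb Z)$ with $G\wedge G$ to get $(\mathbb Z/M\mathbb Z)^{\binom{t}{2}}$, bound $n+\sum_v r_v$ by $t+1$ using that each local Galois group at a ramified prime needs only two generators, and then pass from the resulting subquotient to a subgroup. The only difference is cosmetic --- you work with the quotient $H_2(G,\mathbb Z)/MH_2(G,\mathbb Z)$ and spell out the Smith-normal-form and subquotient-implies-subgroup steps that the paper leaves implicit.
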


\begin{proof} The Galois group $G$ of $K=\mathbb Q(\zeta_f)$ over $\mathbb Q$ is isomorphic to $(\mathbb Z/f\mathbb Z)^*$.
 Since $f$ is the product of $t$ primes that are congruent to $1$ modulo $M$, the group $G$ admits a  subgroup isomorphic to 
 $(\mathbb Z/M\mathbb Z)^{t}$. Therefore its Schur multiplier admits a subgroup isomorphic to $(\mathbb Z/M\mathbb Z)^{t\choose 2}$.
 
 There are precisely $t$ finite primes $v$ of $\mathbb Q$ ramified in $\mathbb Q(\zeta_f)$.
For a prime $w$ of $K$ lying over a finite ramified prime  $v$ of $\mathbb Q$,  local class field theory implies that the Galois group $G_w$  
is a  quotient  of $\mathbb Q_v^*$. Therefore  $G_w$ can be generated by three or two elements, depending on whether $v=2$ or not.
It follows that the Schur multiplier of $G_w$ is cyclic and $r_v=1$ when $v\not=2$.  
For  $v=2$ it is the product of a cyclic group and a group of order at most~$4$, so that $r_v\le 3$. Since $M\ge 2$, we have $v\not=2$ in our situation.
                                                
 By Proposition \ref{propSchur}, the group $\widehat{H}^{-1}(G,Cl_K)$ admits therefore a subquotient isomorphic to $(\mathbb Z/M\mathbb Z)^{t\choose 2}$ modulo
 a subgroup generated by at most $t+1$ elements. So it admits a subquotient isomorphic to a product of
$T={t\choose 2}-(t+1)$ copies of $\mathbb Z/M\mathbb Z$. Since $\widehat{H}^{-1}(G,Cl_K)$ is a subquotient of $Cl_K$,  the same is  true for the class group. But then
 $Cl_K$ also has a subgroup isomorphic to $(\mathbb Z/M\mathbb Z)^T$, as required. \end{proof}
 
 \begin{cor} Let $t\ge 4$ and $M\ge 2$ and let $f$ be the product of $t$ primes that are congruent to $1$ modulo $2M$.
Then the class group of $\mathbb Q(\zeta_f+\zeta_f^{-1})$ admits a subgroup isomorphic to $(\mathbb Z/M\mathbb Z)^T$, where $T=\frac{t^2-3t-2}{2}$.
\end{cor}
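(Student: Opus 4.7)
The plan is to apply Proposition \ref{propSchur} with $k=\mathbb{Q}$ and $K=\mathbb{Q}(\zeta_f+\zeta_f^{-1})$ in exact parallel with the proof of the previous corollary, writing $G^+=\text{Gal}(K/\mathbb{Q})\cong(\mathbb{Z}/f\mathbb{Z})^\ast/\langle -1\rangle$. The analysis of the local terms $r_v$, the correction from $O_k^\ast$, and the passage from subquotient to subgroup of $Cl_K$ will be identical to the cyclotomic case; the substantive new task is to check that the Schur multiplier $H_2(G^+,\mathbb{Z})\cong G^+\wedge G^+$ still contains a copy of $(\mathbb{Z}/M\mathbb{Z})^{\binom{t}{2}}$ after passing to the quotient by complex conjugation. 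This is precisely why one strengthens the hypothesis from $p_i\equiv 1\pmod{M}$ to $p_i\equiv 1\pmod{2M}$.

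Writing $p_i-1=2^{c_i}m_i$ with $m_i$ odd and $M=2^aM'$ with $M'$ odd, the assumption $2M\mid p_i-1$ forces $c_i\geq a+1$ and $M'\mid m_i$. I would split $G=(\mathbb{Z}/f\mathbb{Z})^\ast=G_2\oplus G_{\text{odd}}$ into its $2$-part and odd part, observe that $-1=(2^{c_1-1},\dots,2^{c_t-1})$ lies entirely in $G_2$, and then compute the structure of $G_2/\langle -1\rangle$. Reordering so that $c_1\leq\cdots\leq c_t$ and setting $u_1:=e_1+\sum_{i\geq 2}2^{c_i-c_1}e_i$, $u_i:=e_i$ for $i\geq 2$, a direct check shows $G_2=\bigoplus_i\langle u_i\rangle$ internally, with $\langle u_i\rangle\cong\mathbb{Z}/2^{c_i}\mathbb{Z}$ and $-1=2^{c_1-1}u_1$; hence
\[
G_2/\langle -1\rangle\;\cong\;\mathbb{Z}/2^{c_1-1}\mathbb{Z}\oplus\mathbb{Z}/2^{c_2}\mathbb{Z}\oplus\cdots\oplus\mathbb{Z}/2^{c_t}\mathbb{Z}.
\]
Applying the formula $\bigl(\bigoplus_i\mathbb{Z}/n_i\bigr)\wedge\bigl(\bigoplus_i\mathbb{Z}/n_i\bigr)=\bigoplus_{i<j}\mathbb{Z}/\gcd(n_i,n_j)$, every one of the $\binom{t}{2}$ summands of $(G_2/\langle -1\rangle)\wedge(G_2/\langle -1\rangle)$ has order at least $2^{c_1-1}\geq 2^a$, and every summand of $G_{\text{odd}}\wedge G_{\text{odd}}$ has order divisible by $M'$, so
\[
G^+\wedge G^+\;\supseteq\;(\mathbb{Z}/2^a\mathbb{Z})^{\binom{t}{2}}\oplus(\mathbb{Z}/M'\mathbb{Z})^{\binom{t}{2}}\;=\;(\mathbb{Z}/M\mathbb{Z})^{\binom{t}{2}}.
\]

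The remaining steps mirror the previous corollary. The $t$ ramified primes $p_i$ are all odd (since $p_i\geq 2M+1\geq 5$), so each local Galois group $G^+_w$ is an abelian quotient of $\mathbb{Q}_{p_i}^\ast$ on at most two generators, giving $r_{p_i}\leq 1$ and $\sum_v r_v\leq t$; combined with $n=[\mathbb{Q}:\mathbb{Q}]=1$, Proposition \ref{propSchur} produces a subquotient of $Cl_K$ isomorphic to $H_2(G^+,\mathbb{Z})$ modulo a subgroup generated by at most $t+1$ elements, yielding $(\mathbb{Z}/M\mathbb{Z})^T$ with $T=\binom{t}{2}-(t+1)=(t^2-3t-2)/2$ as a subquotient, and therefore as a subgroup, of $Cl_K$. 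The main obstacle is the invariant-factor computation of $G_2/\langle -1\rangle$: the hypothesis $p_i\equiv 1\pmod{2M}$ is precisely what keeps the smallest invariant factor $2^{c_1-1}$ from dropping below $2^a$, which would otherwise cost a factor of $2$ in every summand of the $2$-part of $G^+\wedge G^+$ and break the Schur multiplier bound.
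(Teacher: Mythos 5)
Your proof is correct and follows essentially the same route as the paper: apply Proposition \ref{propSchur} exactly as in the preceding corollary, the only new content being that $H_2(G,\mathbb Z)\cong G\wedge G$ still contains $(\mathbb Z/M\mathbb Z)^{\binom{t}{2}}$ after passing to $(\mathbb Z/f\mathbb Z)^*/\{\pm 1\}$. The paper dispatches that step more briefly --- since $2M\mid p_i-1$, the element $-1$ dies under the surjection $(\mathbb Z/f\mathbb Z)^*\twoheadrightarrow(\mathbb Z/M\mathbb Z)^t$, so $G$ admits a quotient and hence a subgroup isomorphic to $(\mathbb Z/M\mathbb Z)^t$ --- but your explicit invariant-factor computation of $G_2/\langle -1\rangle$ establishes the same fact correctly.
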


\begin{proof}  The Galois group $G$ of $K=\mathbb Q(\zeta_f+\zeta_f^{-1})$ over $\mathbb Q$ is isomorphic to $(\mathbb Z/f\mathbb Z)^*/\{\pm 1\}$.
 Since $f$ is the product of $t$ primes that are congruent to $1$ modulo $2M$, the group $G$ admits a quotient and hence a subgroup isomorphic to 
 $(\mathbb Z/M\mathbb Z)^{t}$. The rest of the proof is the same as the one of the previous corollary.
 \end{proof}

\section*{\textbf{Acknowledgments}} 
\noindent The first author's work was partially supported by Infosys grant. He would like to thank his advisor Prof. K. Chakraborty for his constant support and guidance. 


\begin{thebibliography}{10}

\bibitem{Brown} K. Brown, {\it Cohomology of groups}, GTM {\textbf 87}, Springer Verlag, New York 1982.

 \bibitem{CF} J. W. S. Cassels and A. Fr\"ohlich, A., {\it Algebraic number theory}, Academic Press, 1993. 

\bibitem{GC3} G. Cornell, {\it Abhyankar's lemma and the class group}, Lecture Notes in Math., \textbf{751}, Springer, Berlin (1979), 82--88.

\bibitem{GC} G. Cornell, {\it Exponential growth of the $\ell$-rank of the class group of the maximal real subfield of cyclotomic fields}, Bull. Amer. Math. Soc., \textbf{8} (1983), 55--58.

\bibitem{CR} G. Cornell and M. Rosen, {\it The $\ell$-rank of the real class group of cyclotomic fields}, Compos. Math., \textbf{53}(2) (1984), 133--141.

\bibitem{Fr} A. Fr\"ohlich, {\it Central extensions, Galois groups, and ideal class groups of number fields}, Contemporary Math. vol. 24, American Math. Soc., Providence, Rhode island 1983.

\bibitem{CM} C. Miller, {\it The second homology group of a group; relations among commutators},
  Proc. Amer. Math. Soc. \textbf{3} (1952), 588--595.

\bibitem{JCM3} J. C. Miller, {\it Class number of totally real number fields}, PhD Thesis, Rutgers University, 2015.

\bibitem{Os} H. Osada, {\it Note on the class-number of the maximal real subfield of a cyclotomic field}, Manuscripta Math. {\textbf 58} (1987), 215--227.

\bibitem{Os2} H. Osada, {\it Note on the class-number of the maximal real subfield of a cyclotomic field. II.},
Nagoya Math. J. {\textbf 113} (1989), 147--151. 

\bibitem{SC} R. Schoof, {\it Class groups of real cyclotomic fields of prime conductor}, Math. Comp.,  \textbf{72} (2003), 913--937.

\bibitem{U} K. Uchida, {\it Class numbers of cubic cyclic fields.},  J. Math. Soc. Japan {\textbf 26} (1974), 447-453.
 
\bibitem{vdL} F. van der Linden, {\it Class number computations of real abelian number fields}, Math. Comp., \textbf{39} (1982), 693--707.

\bibitem{LCW} L. C. Washington, {\it Introduction to cyclotomic fields}, 2nd ed., Springer, 1997.

\bibitem{We} P. J. Weinberger, {\it Real quadratic fields with class numbers divisible by $n$}, J. Number Theory. \textbf{5} (1973), 237--241.

\bibitem{IY} I. Yamaguchi, {\it On the class-number of the maximal real subfield of a cyclotomic field}, J. reine angew. Math., {\bf 272} (1974), 217--220.

\bibitem{YY} Y. Yamamoto, {\it On unramified Galois extensions of quadratic number fields}, Osaka J. Math., {\textbf 7} (1970), 57--76.

\end{thebibliography}
\end{document}